\documentclass[12pt,a4paper,english]{article}
\usepackage[T1]{fontenc}
\usepackage{amsmath,amsthm,amssymb,bbm,color}
\usepackage{mathtools}
\usepackage[english]{babel}
\usepackage[active]{srcltx}
\usepackage{fancyhdr}
\allowdisplaybreaks




\newtheorem{lemma}{Lemma}[section]
\newtheorem{proposition}{Proposition}[section]
\newtheorem{theorem}{Theorem}[section]
\newtheorem{corollary}{Corollary}[section]
\newtheorem{remark}{Remark}[section]


\newcommand{\norm}[1]{\left\lVert#1\right\rVert}
\newcommand{\normP}[1]{\left\lVert#1\right\rVert_{L_2(\PP)}}
\newcommand{\normmP}[1]{\left\lVert#1\right\rVert_{L_2(\mm\otimes\PP)}}

\newcommand{\one}{\mathbbm{1}}
\newcommand{\PP}{{\mathbbm{P}}}
\newcommand{\R}{\mathbbm{R}}
\newcommand{\E}{\mathbbm{E}}
\newcommand{\ud}{{\mathrm{d}}}

\newcommand{\mm}{\mathbbm{m}}
\newcommand{\Lmn}{{L_2\left( \mm^{\otimes n} \right)}}
\newcommand{\DD}{\mathbbm{D}_{1,2}}
\newcommand{\DS}{\mathcal{S}}

\newcommand{\itemm}[1]{\item [{\rm (#1)} ]}

\newcommand{\om}{{\omega}}
\newcommand{\Om}{{\Omega}}

\newcommand{\s}{{\sigma}}

\newcommand{\vph}{{\varphi}}
\newcommand{\la}{{\lambda}}

\newcommand{\brac}{\left(}
\newcommand{\kets}{\right)}
\newcommand{\sqr}{\left[}
\newcommand{\brackets}{\right]}

\newcommand{\equa}{\begin{equation}}
\newcommand{\tion}{\end{equation}}

\newcommand{\bor}{\mathcal{B}}
\newcommand{\ftn}{{\cal F}}


\numberwithin{equation}{section}


\begin{document}

\title{A note on Malliavin smoothness on the L\'{e}vy space}

\author{Eija Laukkarinen\\
   Department of Mathematics and Statistics \\   P.O.Box 35, FI-40014 University of Jyv\"{a}skyl\"{a}, Finland \\
 \texttt{  eija.laukkarinen@jyu.fi}  }

\maketitle


\begin{abstract}
 We consider Malliavin calculus based on the It\^{o} chaos decomposition of square integrable random variables on the L\'{e}vy space. 
 We show that when a random variable satisfies a certain measurability condition, its differentiability and fractional differentiability 
 can be determined by weighted Lebesgue spaces. The measurability condition is satisfied for all random variables if the underlying L\'evy process is 
 a compound Poisson process on a finite time interval.
\end{abstract}

{\parindent0em {\em Keywords:} L\'{e}vy process, Malliavin calculus, interpolation}\\
{\parindent0em \em AMS2010 Subject Classification:}
60G51, 
60H07 

\section{Introduction}

  One extension of Malliavin calculus from the Brownian motion to general L\'{e}vy processes was made using the It\^{o} chaos 
  decomposition on the $L_2$-space over the L\'evy space. This approach was used for
   instance by Nualart and Vives \cite{nualart-vives}, Privault \cite{privault_extension}, Benth, Di Nunno, L{\o}kka, {\O}ksendal and Proske 
   \cite{benth-dinunno-lokka-oksendal-proske}, Lee and Shih \cite{lee-shih_product_formula},  Sol\'e, Utzet and Vives
   \cite{sole-utzet-vives} and Applebaum
  \cite{applebaum2}. 
  
  The wide interest in Malliavin calculus for L\'{e}vy processes in stochastics and 
  applications motivates the study of an accessible characterization
  of differentiability and fractional differentiability.
  Fractional differentiability is defined by real interpolation between the Malliavin Sobolev space $\DD$ and $L_2(\PP)$ and
  we recall the definition in Section \ref{section:fractional} of this paper. 
  Geiss and Geiss \cite{geiss-geiss} and Geiss and Hujo \cite{geiss-hujo} have shown that Malliavin differentiability and 
  fractional differentiability 
  are in a close connection to discrete-time approximation of certain stochastic integrals when the underlying process is a (geometric)
  Brownian motion. Geiss et al. \cite{geiss-geiss-laukkarinen} proved that this applies also to L\'{e}vy processes with jumps.
  These works assert that knowing the parameters of 
  fractional smoothness allow to design discretization time-nets
  such that the optimal approximation rate can be achieved. 
  For details, see \cite{geiss-geiss}, \cite{geiss-hujo} and \cite{geiss-geiss-laukkarinen}.

  Steinicke \cite{steinicke} and Geiss and Steinicke \cite{geiss-steinicke} take advantage of the fact that any random variable $Y$ on 
  the L\'evy space can be represented as a functional $Y = F(X)$ of the L\'evy process $X$, where $F$ is a real valued measurable mapping 
  on the Skorohod space of right continuous functions. Let us restrict to the case that $F(X)$ only depends on the jump part of $X$. 
  Using the corresponding result from 
  Sol\'e, Utzet and Vives \cite{sole-utzet-vives} and Al\`os, Le\'on and Vives \cite{alos-leon-vives} on the canonical space, Geiss and Steinicke
  \cite{geiss-steinicke} show that the condition $F(X)\in\DD$ is equivalent with
  $$\iint_{\R_+\times\R}\E\left[ \left(F(X+x\one_{[t,\infty)}) - F(X) \right)^2 \right] \ud t\nu(\ud x) < \infty, $$
  where $\nu$ is the L\'evy measure of $X$.
  On the other hand one gets from Mecke's formula \cite{mecke} that
  $$\iint_A \E[F( X+x\one_{[t,\infty)} )]\ud t\nu(\ud x) = \E[N(A)F(X)]$$
  for any nonnegative measurable $F$ and any $A\in\bor([0,\infty)\times\R\setminus\{0\})$, 
  where $N$ is the Poisson random measure associated with $X$  as in Section \ref{section:preliminaries}. 
  These results raise the following questions: when can Malliavin 
  differentiability be described using a weight function such as $N(A)$, and is there a weight function for fractional differentiability?

 In this paper  we search for weight functions $\Lambda$ and measurability conditions on $Y$ such that the criteria  
   \begin{equation}\|Y \Lambda\|_{L_2(\PP)} < \infty  \label{equation:weight_criteria}\end{equation}
  describes the smoothness of $Y$. We begin by recalling the orthogonal It\^{o} 
  chaos decomposition
 $$Y = \sum_{n=0}^\infty I_n(f_n)$$
 on $L_2(\PP)$ and the Malliavin Sobolev space
 $$\DD = \left\{ Y\in L_2(\PP): \|Y\|_{\DD} = \sum_{n=0}^\infty (n+1) \|I_n(f_n)\|_{L_2(\PP)}^2 < \infty \right\}$$ 
 in Section \ref{section:preliminaries}.
 Then,  in Section \ref{section:differentiability}, we obtain an equivalent condition for Malliavin differentiability. The assertion is that 
 $$Y\in\DD \text{ if and only if } \normP{ Y \sqrt{N(A) +1}} < \infty,$$
  whenever $Y$ is measurable with respect to $\ftn_A$, the
 completion of the sigma-algebra generated by $\left\{ N(B) : B\subseteq A, \,B\in\bor([0,\infty)\times\R)\right\}$ 
  and the set $A\in\bor([0,\infty)\times\R\setminus\{0\})$ satisfies $\E[N(A)]<\infty$.

 Section \ref{section:fractional} treats fractional differentiability and our aim is to adjust the weight function $\Lambda$  so that the 
 condition \eqref{equation:weight_criteria} describes 
 a given degree of smoothness. We recall the $K$-method of real interpolation which we 
 use to determine the interpolation spaces $(L_2(\PP),\DD)_{\theta,q}$ for $\theta\in(0,1)$ and $q\in[1,\infty]$.  These spaces are intermediate
 between $\DD$ and $L_2(\PP)$.
 We show that when $Y$ is $\ftn_A$-measurable and $\E[N(A)]<\infty$, then $Y$ has fractional differentiability of order $\theta$ for $q=2$ 
 if and only if
$$\normP{ Y\sqrt{ N(A) +1}^{\,\theta} } < \infty.$$
%

\section{Preliminaries} \label{section:preliminaries}

Consider a L\'evy process $X = (X_t)_{t\geq0}$ with c\`{a}dl\`{a}g paths on a
complete probability space $({\Omega},{\cal F},\mathbbm{P})$, where ${\cal F}$ is the completion of the sigma-algebra generated by $X$.
The L\'{e}vy-It\^{o} decomposition states that there exist $\gamma\in\mathbbm{R}$, $\sigma\geq0$, a standard Brownian motion $W$ and 
a Poisson random measure $N$ on $\mathcal{B}([0,\infty)\times\mathbbm{R})$ such that
\[X_t=\gamma t + \sigma W_t + \iint_{(0,t]\times \{ |x|>1\}} x N(\mathrm{d} s,\mathrm{d} x) 
+  \iint_{(0,t]\times\left\{0<|x|\leq1\right\}}x\tilde{N}(\mathrm{d} s,\mathrm{d} x)\]
 holds for all $t\geq0$ a.s.
Here $\tilde{N}(\mathrm{d} s,\mathrm{d} x) = N(\mathrm{d} s,\mathrm{d} x)-\mathrm{d} s\nu(\mathrm{d} x)$ is the compensated Poisson random measure and 
$\nu:\mathcal{B}(\mathbbm{R})\to[0,\infty]$ is the L\'{e}vy measure of $X$ satisfying $\nu(\{0\})=0$,
$\int_\mathbbm{R} (x^2\wedge1)\nu(\mathrm{d} x)<\infty$ and $\nu(B)=\mathbbm{E} \left[ N((0,1]\times B) \right]$ when $0\not\in \bar{B}$.
The triplet $(\gamma,\sigma,\nu)$ is called the L\'evy triplet.

Let us recall the It\^{o} chaos decomposition from \cite{ito}:
Denote $\mathbbm{R}_+ := [0,\infty)$. We consider the following measure $\mathbbm{m}$ defined as 
\begin{eqnarray*}
 \mathbbm{m}:\mathcal{B}(\mathbbm{R}_+\times\mathbbm{R})\to[0,\infty],& 
 & \mathbbm{m}(\mathrm{d} s,\mathrm{d} x):= \mathrm{d} s \sqr {\sigma}^2 \delta_0(\mathrm{d} x)  + x^2 \nu(\mathrm{d} x) \brackets.
\end{eqnarray*}
For sets $B\in\mathcal{B}(\mathbbm{R}_+\times\mathbbm{R})$ such that $\mathbbm{m}(B) < \infty$, a random measure $M$ is defined by
\[M(B) := \sigma \int_{\left\{ s\in\mathbbm{R}_+:(s,0)\in B\right\}} \mathrm{d} W_s + \lim_{n\to\infty}\iint_{\left\{(s,x)\in B: \frac{1}{n} < |x| < n\right\}} x\ \tilde{N}(\mathrm{d} s,\mathrm{d} x),\]
where the convergence is taken in $L_2(\mathbbm{P}):=L_2({\Omega},{\cal F},\mathbbm{P})$.
The random measure $M$ is independently scattered and it holds that $\mathbbm{E}
M(B_1) M(B_2) = \mathbbm{m}(B_1 \cap B_2)$ for all $B_1,B_2\in\mathcal{B}(\mathbbm{R}_+\times\mathbbm{R})$ with
$\mathbbm{m}(B_1)<\infty$ and $\mathbbm{m}(B_2)<\infty$.

For $n=1,2,\ldots$ write 
\[ \Lmn = L_2 \left((\mathbbm{R}_+\times\mathbbm{R})^n, \mathcal{B}(\mathbbm{R}_+\times\mathbbm{R})^{\otimes n}, \mathbbm{m}^{\otimes n}\right)\] 
and set $L_2 \left(\mathbbm{m}^{\otimes 0} \right):=\mathbbm{R}$. A function $f_n:(\mathbbm{R}_+\times\mathbbm{R})^n\to\mathbbm{R}$ is said to
be symmetric, if it coincides with its symmetrization
$\tilde{f}_n$,
\[\tilde{f}_n((s_1,x_1),\ldots,(s_n,x_n))=\frac{1}{n!}\sum_{\pi}f_n\left(  \left( s_{\pi(1)},x_{\pi(1)} \right),\ldots, \left( s_{\pi(n)},x_{\pi(n)} \right) \right),\]
where the sum is taken over all permutations
$\pi:\{1,\ldots,n\}\to\{1,\ldots,n\}$.

We let $I_n$ denote the multiple integral of order $n$ defined by It\^{o} \cite{ito} and shortly recall the definition. 
For pairwise disjoint 
$B_1,\ldots, B_n\in\mathcal{B}(\mathbbm{R}_+\times\mathbbm{R})$ with $\mathbbm{m}(B_i)<\infty$ the 
integral of $\mathbbm{1}_{B_1}\otimes\cdots\otimes\mathbbm{1}_{B_n}$ 
is defined by
\begin{equation} I_n\left( \mathbbm{1}_{B_1}\otimes\cdots\otimes\mathbbm{1}_{B_n}\right) := M(B_1)\cdots M(B_n). \label{equation:multiple_integral}\end{equation}
It is then extended to a linear and continuous operator $I_n:\Lmn\to L_2(\mathbbm{P})$. We let $I_0(f_0):=f_0$ for $f_0\in\mathbbm{R}$. 
For the multiple integral we have
\begin{equation}\label{equation:inner_product_L_2}
I_n(f_n)=I_n(\tilde{f}_n) \text{ and } \mathbbm{E} \sqr I_n(f_n)I_k(g_k) \brackets 
=\begin{cases}
        0, & \text{ if }n\neq k\\
       n! \left( \tilde{f_n},\tilde{g_n}\right)_{L_2(\mathbbm{m}^{\otimes n})}, & \text{ if }n=k
       \end{cases}
\end{equation}
for all $f_n\in \Lmn$ and $g_k\in L_2\left(\mathbbm{m}^{\otimes k}\right)$. 

According to \cite[Theorem 2]{ito}, for any $Y\in L_2(\PP)$ there exist functions $f_n\in \Lmn$,
$n=0,1,2,\ldots,$ such that
\[ Y=\sum_{n=0}^\infty
I_n(f_n) \quad \text{ in } L_2(\mathbbm{P})\]
and the functions $f_n$ are unique  in $L_2(\mm^{\otimes n})$ when they are chosen to be
symmetric. We have
\[ \norm{Y}^2_{L_2(\mathbbm{P})} = \sum_{n=0}^\infty n! \norm{\tilde{f}_n}^2_\Lmn.\]

We recall the definition of the Malliavin Sobolev space $\DD$ based on the
It\^{o} chaos decomposition. We denote by $\mathbbm{D}_{1,2}$ 
the space of all $Y = \sum_{n=0}^\infty
I_n(f_n) \in L_2(\mathbbm{P})$ such that
\[\|Y\|^2_{\mathbbm{D}_{1,2}} := \sum_{n=0}^\infty (n+1)! \norm{\tilde{f}_n}^2_\Lmn < \infty.\]
Let us write $L_2(\mathbbm{m}\otimes\mathbbm{P}) := L_2(\mathbbm{R}_+\times\mathbbm{R}\times{\Omega},
\mathcal{B}(\mathbbm{R}_+\times\mathbbm{R})\otimes{\cal F}, \mathbbm{m}\otimes\mathbbm{P})$ and define the
Malliavin derivative $D:\mathbbm{D}_{1,2}\to L_2(\mathbbm{m}\otimes\mathbbm{P})$  in the following way.
For $B_1,\ldots,B_n \in \bor(\R_+\times\R)$, which are pairwise disjoint and such that $\mm(B_i)< \infty$ for all $i=1,\ldots,n$, we let
\begin{align*}D_{t,x} I_n\left( \mathbbm{1}_{B_1}\otimes\cdots\otimes\mathbbm{1}_{B_n}\right) 
&= nI_{n-1}\left( \tilde{\mathbbm{1}_{B_1}\otimes\cdots\otimes\mathbbm{1}_{B_n}}(\cdot,(t,x))\right)\\
&:= \sum_{i=1}^n \prod_{j\neq i} M(B_j) \one_{B_i}(t,x). 
\end{align*}
It holds $\normmP{DI_n\left( \mathbbm{1}_{B_1}\otimes\cdots\otimes\mathbbm{1}_{B_n}\right)} 
= \sqrt{n}\normP{I_n\left( \mathbbm{1}_{B_1}\otimes\cdots\otimes\mathbbm{1}_{B_n}\right) }$ and
the operator is extended to $\left\{ I_n(f_n): f_n\in L_2(\mm^{\otimes n})  \right\}$ by linearity and continuity. For 
$Y = \sum_{n=0}^\infty I_n(f_n) \in \DD$ it then holds that
\[D_{t,x}Y := \sum_{n=1}^\infty n I_{n-1} \left(\tilde{f}_n(\cdot,(t,x))\right) \] 
converges in $L_2(\mathbbm{m}\otimes\mathbbm{P})$.

\begin{remark}\label{remark:inner_product_DD}
Note that also for any $u\in L_2(\mm\otimes\PP)$ one finds a chaos representasion $u=\sum_{n=0}^\infty I_n(g_{n+1})$, 
where the functions $g_{n+1} \in L_2\left(\mm^{\otimes(n+1)}\right)$
are symmetric in the first $n$ variables. For $u,v\in L_2(\mm\otimes\PP)$ with 
$u=\sum_{n=0}^\infty I_n(g_{n+1})$ and $v=\sum_{n=0}^\infty I_n(h_{n+1})$ it then holds
\begin{equation}\label{equation:inner_product_DD}
 (u,v)_{L_2(\mm\otimes\PP)} = \sum_{n=0}^\infty n!  \left(g_{n+1},h_{n+1} \right)_{L_2\left(\mm^{\otimes (n+1)}\right)}.
 \end{equation}
\end{remark}
For more information, see for example \cite{nualart-vives}, \cite{privault_extension}, \cite{benth-dinunno-lokka-oksendal-proske}, 
\cite{lee-shih_product_formula}, \cite{sole-utzet-vives} and
  \cite{applebaum2}. 

\section{Differentiability}
\label{section:differentiability}

We shall use the notation $\R_0 = \R\setminus\{0\}$. For $A\in\bor(\R_+\times\R_0)$ we denote by
$\ftn_A$ the completion of the sigma-algebra $ \sigma \left( N(B) : B\subseteq A \text{ and } B\in\bor(\R_+\times\R) \right) $. 
The following theorem implies that if $Y\in L_2(\PP)$ is $\ftn_A$-measurable and $\E[N(A)]<\infty$, then 
$Y\in\DD$ if and only if $\E[Y^2 N(A)]<\infty$.

\begin{theorem}\label{theorem:second_double_inequality}
 Let $A\in\bor(\R_+\times\R_0)$ be such that $ \E\left[N(A)\right] =  (\ud t \otimes \nu)(A) < \infty$ and 
 $Y\in L_2(\PP)$. 
 \begin{enumerate}
  \item If $Y\in\DD$, then $Y\sqrt{N(A)}\in L_2(\PP)$ and
        \begin{align}\label{equation:second_double_inequality1}
  \left| \normP{Y\sqrt{N(A)}} - \normP{Y} \sqrt{\E\sqr N(A)\brackets}  \right| 
   \leq \normmP{DY\one_A}.\end{align}
   \item If $Y\sqrt{N(A)}\in L_2(\PP)$ and $Y$ is $\ftn_A$-measurable, then $Y\in\DD$ and
     \begin{align}\label{equation:second_double_inequality2}
    \normmP{DY}
  \leq \normP{Y\sqrt{N(A)}} + \normP{Y}\sqrt{\E\sqr N(A)\brackets}.
 \end{align}
 \end{enumerate}
\end{theorem}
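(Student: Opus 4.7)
The plan is to reduce both inequalities to Minkowski's inequality in $L_2(A\times\Om,\ud t\otimes\nu\otimes\PP)$, using two ingredients from the introduction: Mecke's formula, and the difference-operator representation
$$x\,D_{t,x}Y = Y\circ\tau_{t,x}-Y, \qquad (t,x)\in\R_+\times\R_0,$$
valid for $Y\in\DD$ (see \cite{geiss-steinicke}), where $\tau_{t,x}$ denotes the jump-translation $X\mapsto X+x\one_{[t,\infty)}$.

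The first step is to recognize the three quantities entering \eqref{equation:second_double_inequality1}--\eqref{equation:second_double_inequality2} as $L_2(A\times\Om,\ud t\otimes\nu\otimes\PP)$-norms of three functions of $(t,x,\om)$. Since $\mm(\ud t,\ud x)=\ud t\,x^2\nu(\ud x)$ on $\R_+\times\R_0$, the difference representation gives
$$\normmP{DY\one_A}^2 = \iint_A \E\sqr(Y\circ\tau_{t,x}-Y)^2\brackets\ud t\,\nu(\ud x),$$
Mecke's formula applied to the squared functional yields
$$\E\sqr Y^2 N(A)\brackets = \iint_A \E\sqr(Y\circ\tau_{t,x})^2\brackets\ud t\,\nu(\ud x),$$
and trivially $\normP{Y}^2\,\E[N(A)]=\iint_A \E[Y^2]\,\ud t\,\nu(\ud x)$.

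For part (i), regard $(t,x,\om)\mapsto Y\circ\tau_{t,x}(\om)$, the constant-in-$(t,x)$ map $(t,x,\om)\mapsto Y(\om)$ and their difference as elements of $L_2(A\times\Om,\ud t\otimes\nu\otimes\PP)$. The reverse triangle inequality there, rewritten through the three identifications above, is exactly \eqref{equation:second_double_inequality1}, and simultaneously shows that $Y\sqrt{N(A)}\in L_2(\PP)$.

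For part (ii) the preliminary task is to establish $Y\in\DD$. The $\ftn_A$-measurability forces $Y\circ\tau_{t,x}=Y$ $\PP$-a.s.\ for $(t,x)\notin A$ with $x\neq0$, and makes $Y$ independent of $W$ (as $\ftn_A$ sits inside the jump $\sigma$-algebra), so once $Y\in\DD$ one has $DY=DY\one_A$ $\mm\otimes\PP$-a.e. To obtain $Y\in\DD$ one invokes the difference-operator characterization of Geiss--Steinicke \cite{geiss-steinicke}: it suffices to bound $\iint_A \E[(Y\circ\tau_{t,x}-Y)^2]\ud t\,\nu(\ud x)$, which by the triangle inequality in $L_2(A\times\Om,\ud t\otimes\nu\otimes\PP)$ combined with Mecke is at most $\brac\sqrt{\E[Y^2 N(A)]}+\normP{Y}\sqrt{\E[N(A)]}\kets^2<\infty$. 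The same Minkowski inequality used in part (i), now combined with $DY=DY\one_A$, then delivers \eqref{equation:second_double_inequality2}. The main obstacle is precisely this upgrade step: the Geiss--Steinicke criterion is customarily stated against the intensity $\ud t\,\nu(\ud x)$ rather than $\mm$, so one must translate carefully between the two frameworks, which is enabled by the $x^2$-factor in $\mm$ cancelling the $1/x$ in $xD_{t,x}Y=Y\circ\tau_{t,x}-Y$.
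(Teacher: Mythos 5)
Your argument is correct in outline, but it takes a genuinely different route from the paper. You outsource the analytic core to the functional-representation machinery of Geiss and Steinicke: the identity $x D_{t,x}Y = Y\circ\tau_{t,x}-Y$ for $x\neq 0$ and $Y\in\DD$, the characterization of $\DD$ for functionals of the jump part, and Mecke's formula; after the identifications $\normmP{DY\one_A}=\|Y\circ\tau-Y\|$, $\normP{Y\sqrt{N(A)}}=\|Y\circ\tau\|$ and $\normP{Y}\sqrt{\E[N(A)]}=\|Y\|$ in $L_2(A\times\Om,\ud t\otimes\nu\otimes\PP)$, both inequalities are indeed just the (reverse) triangle inequality. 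The paper instead stays entirely inside the It\^o chaos framework: Proposition \ref{proposition:first_double_inequality} is proved for bounded $Y$ from the product rule $D(YZ)=YDZ+ZDY+xDY\,DZ$ on $\DS$ (Lemma \ref{lemma:smooths_dense_in}), the duality $\E[Y^2N(A)]-\E[Y^2]\E[N(A)]=(DY^2,\tfrac1x\one_A)_{L_2(\mm\otimes\PP)}$ and H\"older, then density and dominated convergence; the theorem follows by truncation $g_m(Y)$ (Lemma \ref{lemma:Lipschitz}), monotone convergence/Fatou, and Lemma \ref{lemma:measurability} for the $\ftn_A$-measurability in part 2. Your route buys brevity and a transparent geometric picture, at the price of importing nontrivial transfer results that the paper deliberately avoids; the paper's route is self-contained and needs no representation $Y=F(X)$. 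Two points in your sketch deserve explicit care if you flesh it out: the claim $Y\circ\tau_{t,x}=Y$ a.s.\ for $(t,x)\notin A$ requires choosing a representative of $Y$ that factors through $N|_A$ together with the Geiss--Steinicke result that the difference quotient is $\ud t\otimes\nu\otimes\PP$-a.e.\ independent of the representative, and the vanishing of the Brownian direction $D_{t,0}Y$ (so that $DY=DY\one_A$) is most cleanly justified by Lemma \ref{lemma:measurability}, since $f_n=f_n\one_A^{\otimes n}$ $\mm^{\otimes n}$-a.e.\ forces the derivative to be supported on $A$. With those citations in place your proof is sound.
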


We denote by $\DS$ the set of random variables $Y$ such that there exists $m\geq 1$, $f\in C_c^\infty(\R^m)$ and  
$0\leq t_0 < t_1 < \cdots t_m < \infty$ such that
$$Y= f\brac X_{t_1} - X_{t_0},\ldots,X_{t_m} - X_{t_{m-1}}  \kets.$$

\begin{lemma}[Theorem 4.1, Corollaries 4.1 and 3.1 in \cite{geiss-laukkarinen}]\label{lemma:smooths_dense_in}\hfill

\begin{itemize}
 \itemm{a} $\DS$ is dense in $\DD$ and $L_2(\PP)$.\label{lemma:smooths_dense_in_a}
 \itemm{b} For $Y,Z\in\DS$ it holds
           $D_{t,x}(YZ) = Y D_{t,x}Z + Z D_{t,x}Y + x D_{t,x} Y D_{t,x} Z$ $\mm\otimes\PP$-a.e.
\end{itemize}

\end{lemma}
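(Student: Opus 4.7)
Since the lemma quotes results from \cite{geiss-laukkarinen}, the natural approach is to reconstruct that argument in two independent pieces: density in part (a) and the product rule in part (b).

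For part (a), the plan is a three-step chain that reduces the problem from a general $Y\in\DD$ to an element of $\DS$. First, density of $\DS$ in $L_2(\PP)$ is essentially a statement about the completion of $\sigma(X)$: cylindrical random variables $f(X_{t_1}-X_{t_0},\ldots,X_{t_m}-X_{t_{m-1}})$ with $f$ bounded Borel are dense in $L_2(\PP)$ by a monotone class argument, and mollification replaces bounded Borel $f$ by $f\in C_c^\infty(\R^m)$. Second, for $Y=\sum_{n=0}^\infty I_n(f_n)\in\DD$ one truncates to $Y_N=\sum_{n=0}^N I_n(f_n)$ and approximates each symmetric kernel $f_n$ in $\Lmn$ by linear combinations of indicators $\mathbbm{1}_{B_1}\otimes\cdots\otimes\mathbbm{1}_{B_n}$ with pairwise disjoint $B_i$'s of finite $\mm$-measure. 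By \eqref{equation:multiple_integral} each such $I_n$ equals the product $M(B_1)\cdots M(B_n)$, hence is a polynomial in $\{M(B_i)\}$. Third, each $M(B)$ is either (for $B$ avoiding $\R_+\times\{0\}$) a compensated jump sum or (for $B\subseteq\R_+\times\{0\}$) a scaled Brownian increment; in both cases it is an $L_2$-limit of smooth functionals of increments of $X$, so a convolution-based smoothing produces $\DS$-approximants that converge in the $\DD$-norm once we restrict to a fixed finite chaos order.

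For part (b), take $Y=f(\Delta)$ and $Z=g(\Delta)$ with $\Delta=(X_{t_1}-X_{t_0},\ldots,X_{t_m}-X_{t_{m-1}})$, and use the explicit formulas for $D$ on $\DS$. For $(s,x)$ with $x\neq 0$ one has the difference formula $D_{s,x}Y=\tfrac{1}{x}[f(\Delta+x e_k)-f(\Delta)]$, where $e_k$ is the standard basis vector of $\R^m$ corresponding to the unique index $k$ with $s\in(t_{k-1},t_k]$; for $(s,0)$ one has $D_{s,0}Y=\sigma\sum_{k=1}^m \partial_k f(\Delta)\mathbbm{1}_{(t_{k-1},t_k]}(s)$. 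In the jump case, setting $\tilde{Y}=Y+xD_{s,x}Y$ and $\tilde{Z}=Z+xD_{s,x}Z$ yields $\tilde{Y}\tilde{Z}-YZ=x\bigl(YD_{s,x}Z+ZD_{s,x}Y+xD_{s,x}YD_{s,x}Z\bigr)$, and division by $x$ gives the claimed identity. In the Brownian case the classical Leibniz rule applied to $\partial_k(fg)$ gives $D_{s,0}(YZ)=YD_{s,0}Z+ZD_{s,0}Y$, which matches the unified formula because the correction term is multiplied by $x=0$.

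The main obstacle is the $\DD$-density in part (a): $L_2(\PP)$-approximation of a simple multiple integral by elements of $\DS$ is routine, but the $\DD$-norm also forces control over the $L_2(\mm\otimes\PP)$-norm of the derivative, so the smoothing must be compatible with $D$. The standard remedy is to combine a chaos truncation (which restricts the problem to finitely many orders, via $\|Y\|_{\DD}^2=\sum_n(n+1)!\|\tilde f_n\|^2_{\Lmn}$) with $C_c^\infty$ convolution on $\R^m$ applied to the representing cylindrical function, and to pass first to the limit in the smoothing parameter and only afterwards in the truncation order.
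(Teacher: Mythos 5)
The paper does not prove this lemma at all: it is imported verbatim from Geiss and Laukkarinen \cite{geiss-laukkarinen} (their Theorem 4.1 and Corollaries 4.1 and 3.1), so there is no in-paper argument to match your plan against; the only fair comparison is with the cited source, and there your reconstruction has a real gap. Part (b) is essentially correct algebra (modulo passing to a common refinement of the two time partitions so that $Y$ and $Z$ are cylindrical over the same increments), but it \emph{presupposes} the difference-quotient representation $D_{s,x}Y=\frac{1}{x}\left[f(\Delta+xe_k)-f(\Delta)\right]$ for $x\neq0$, which is itself one of the cited results and requires a nontrivial chaos-expansion computation to establish from the definition of $D$ via \eqref{equation:multiple_integral}; as written, your part (b) derives one cited statement from another. (Also, with the convention $\mm(\ud s,\ud x)=\ud s[\sigma^2\delta_0(\ud x)+x^2\nu(\ud x)]$ used here, the factor $\sigma$ in your formula for $D_{s,0}$ double-counts the weight already carried by the measure; this does not affect the Leibniz algebra but signals that the derivative formulas need to be pinned down, not assumed.)

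The step that would actually fail is the third step of part (a). A random variable $M(B)$ is \emph{not} an $L_2$-limit of smooth functions of finitely many increments of $X$ in any routine way: for $B\subseteq\R_+\times\{0\}$ it is $\sigma\int\ud W_s$, and when the L\'evy process has both a Gaussian part and jumps the Brownian increment $W_t-W_s$ is not a function of finitely many increments of $X$ (separating the continuous part from the jump part is a path functional); for $B$ in the jump region, $\iint_{(s,t]\times(a,b]}x\,\tilde N(\ud u,\ud x)$ is a functional of the jump measure, again not of finitely many increments. Convolution smoothing of a representing function cannot bridge this, because there is no representing function of finitely many increments to smooth. The substantive content of the cited Theorem 4.1 is precisely the missing mechanism: one approximates such jump integrals by Riemann-type sums $\sum_j g(X_{u_{j+1}}-X_{u_j})$ over fine partitions, with $g$ smooth, compactly supported and vanishing near the origin, shows that for fine partitions each small increment carries at most one relevant jump with high probability, and then controls not only the $L_2(\PP)$-error but also the $L_2(\mm\otimes\PP)$-norm of the derivative of the approximants so that convergence holds in $\|\cdot\|_{\DD}$. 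Your plan correctly identifies that the $\DD$-norm is the obstacle, but the proposed remedy (chaos truncation plus mollification in $\R^m$) does not reach it; you would need to supply the increment-sum approximation of $M(B)$ together with its derivative estimates, which is where all the work in \cite{geiss-laukkarinen} lies.
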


\begin{proposition}\label{proposition:first_double_inequality}
 Let $Y = \sum_{n=0}^\infty I_n(f_n)$ be bounded and $A\in\bor(\R_+\times\R_0)$ be such that $ \E\left[N(A)\right] =  (\ud t \otimes \nu)(A) < \infty$.
 Then $\sum_{n=1}^\infty nI_{n-1}\brac \tilde{f_n}(\cdot,{ * })\kets \one_A({ *})$ converges in $L_2(\mm\otimes\PP)$ and
 \begin{align}\label{equation:first_double_inequality}
  &\left| \normP{Y\sqrt{N(A)}} - \normP{Y} \sqrt{\E\sqr N(A)\brackets}  \right| 
   \leq \normmP{\sum_{n=1}^\infty  \left(   nI_{n-1}\!\brac \tilde{f_n} \kets \one_A  \right) } \nonumber \\*
  &\qquad\qquad\leq \normP{Y\sqrt{N(A)}} + \normP{Y}\sqrt{\E\sqr N(A)\brackets} .
 \end{align}
\end{proposition}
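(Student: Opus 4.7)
The plan is to prove the inequality first for $Y\in\DS$ via an explicit identity and then extend to bounded $Y$ by approximation, which will simultaneously yield the convergence of the series. For $Y\in\DS$ one has $Y^2\in\DS\subseteq\DD$ with chaos expansion $Y^2=\sum_k I_k(h_k)$. Since $A\subseteq\R_+\times\R_0$ and $\E[N(A)]<\infty$, the function $g_A(t,x):=x^{-1}\one_A(t,x)$ lies in $L_2(\mm)$ with $\|g_A\|_{L_2(\mm)}^2=\E[N(A)]$, and (by approximating $A$ by sets bounded away from $\{x=0\}$) one has $I_1(g_A)=N(A)-\E[N(A)]$. Hence \eqref{equation:inner_product_L_2} gives $\E[Y^2 N(A)]=\E[Y^2]\E[N(A)]+(h_1,g_A)_{L_2(\mm)}$. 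The first-order kernel satisfies $h_1(t,x)=\E[D_{t,x}(Y^2)]$, and the product rule of Lemma \ref{lemma:smooths_dense_in}(b) yields $D_{t,x}(Y^2)=2YD_{t,x}Y+x(D_{t,x}Y)^2$. Substituting and using that $\mm$ restricted to $A$ equals $x^2\ud t\,\nu(\ud x)$ leads to the identity
\[
\normmP{DY\one_A}^2 = \E[Y^2N(A)]-\E[Y^2]\E[N(A)]-2(G,DY\one_A)_{L_2(\mm\otimes\PP)},
\]
where $G(t,x,\omega):=g_A(t,x)Y(\omega)$ has $\|G\|_{L_2(\mm\otimes\PP)}=\normP{Y}\sqrt{\E[N(A)]}$. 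Setting $a=\normP{Y\sqrt{N(A)}}$, $b=\normP{Y}\sqrt{\E[N(A)]}$, $c=\normmP{DY\one_A}$, Cauchy--Schwarz bounds the cross term by $bc$, so $|c^2-a^2+b^2|\leq 2bc$, equivalent to $(c-b)^2\leq a^2\leq(c+b)^2$, i.e.\ $|a-b|\leq c\leq a+b$.

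To extend to bounded $Y$, fix $\phi\in C_c^\infty(\R)$ with $\phi(y)=y$ on $[-\|Y\|_\infty,\|Y\|_\infty]$ and $\phi(0)=0$. By Lemma \ref{lemma:smooths_dense_in}(a) pick $Y_k'\in\DS$ with $Y_k'\to Y$ in $L_2(\PP)$; then $Y_k:=\phi(Y_k')\in\DS$ (the defining $C_c^\infty$-function composed with $\phi$ stays in $C_c^\infty$ thanks to $\phi(0)=0$), satisfies $\|Y_k\|_\infty\leq\|\phi\|_\infty$, and converges to $Y$ in $L_2(\PP)$ by dominated convergence. Applying the smooth-case inequality to $Y_k-Y_j\in\DS$ together with the bound
\[\E[(Y_k-Y_j)^2N(A)]\leq 2\sup_l\|Y_l\|_\infty\cdot\|Y_k-Y_j\|_{L_2(\PP)}\|N(A)\|_{L_2(\PP)}\longrightarrow 0\]
shows that $(DY_k\one_A)_k$ is Cauchy in $L_2(\mm\otimes\PP)$. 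Since $Y_k\to Y$ in $L_2(\PP)$ gives $\tilde{f}_n^{(k)}\to\tilde{f}_n$ in $L_2(\mm^{\otimes n})$ for every $n$, the chaos-orthogonality in $L_2(\mm\otimes\PP)$ (Remark \ref{remark:inner_product_DD}) identifies the Cauchy limit as $\sum_{n=1}^\infty nI_{n-1}(\tilde{f}_n(\cdot,*))\one_A(*)$, which yields the claimed convergence of the series. Passing $k\to\infty$ in the smooth-case inequality (using $\E[Y_k^2N(A)]\to\E[Y^2N(A)]$ via $\|Y_k^2-Y^2\|_{L_2(\PP)}\to 0$ and Cauchy--Schwarz against $N(A)\in L_2(\PP)$) delivers \eqref{equation:first_double_inequality}.

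The main obstacle is the approximation step: one must produce $\DS$-approximants that are uniformly bounded in $L_\infty$ so that $\E[(Y_k-Y_j)^2N(A)]$ can be controlled despite $N(A)$ only lying in $L_2$, and then identify the resulting Cauchy limit with the formal chaos series without \emph{a priori} knowing that $Y\in\DD$.
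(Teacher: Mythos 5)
Your proposal is correct and follows essentially the same route as the paper: the same identity $\E[Y^2N(A)]-\E[Y^2]\E[N(A)]=(DY^2,x^{-1}\one_A)_{L_2(\mm\otimes\PP)}$ via $I_1(x^{-1}\one_A)=N(A)-\E[N(A)]$ and the product rule of Lemma \ref{lemma:smooths_dense_in}(b), the same Cauchy--Schwarz bound on the cross term, and the same density-plus-uniform-boundedness approximation with identification of the $L_2(\mm\otimes\PP)$-limit through the chaos kernels. Your only deviations are implementation details (an explicit $C_c^\infty$-cutoff $\phi$ to build the uniformly bounded $\DS$-sequence, and using $N(A)\in L_2(\PP)$ with $L_2$-convergence instead of the paper's a.s.\ convergence plus dominated convergence), which do not change the argument.
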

\begin{proof}
 Assume first that $Y\in\DS$. Then also $Y^2 = \sum_{n=0}^\infty I_n(g_n) \in\DS$. 
Letting $h(t,x) = \frac{1}{x}\one_A(t,x)$ we have $I_1(h) = N(A) - \E\sqr N(A) \brackets$ and we get using
\eqref{equation:inner_product_L_2} and \eqref{equation:inner_product_DD} that
 \begin{align*}
 \E\sqr Y^2  N(A)\brackets - \E\sqr Y^2 \brackets \E\sqr N(A)\brackets
 & = \E\sqr Y^2 I_1(h) \brackets   
  = (g_1,h)_{L_2(\mm)}\\
 & = (DY^2,h{\one_{\Om}})_{L_2(\mm\otimes\PP)}.
\end{align*}
From Lemma \ref{lemma:smooths_dense_in} (b) we obtain
\begin{align*}
    \E\sqr Y^2  N(A)\brackets
 & = \E\sqr Y^2 \brackets \E\sqr N(A)\brackets + (DY^2,h)_{L_2(\mm\otimes\PP)}\\
 & = \E\sqr Y^2 \brackets \E\sqr N(A)\brackets + 2  \iint_A \E\sqr YD_{t,x}Y \brackets x\ud t \nu(\ud x)\\*
 &\qquad + \iint_A \E\sqr \brac D_{t,x} Y\kets^2\brackets\mm(\ud t,\ud x).
 \end{align*}
 Using H\"{o}lder's inequality we get
 $$\left| 2  \iint_A \E\sqr YD_{t,x}Y \brackets x\ud t \nu(\ud x)\right| \leq 2\normP{Y}\sqrt{\E\sqr N(A)\brackets} \normmP{DY\one_A},$$
 so that
 \begin{align*}
   &  \brac -\normP{Y}\sqrt{\E\sqr N(A)\brackets} + \normmP{DY\one_A} \kets^2
    \leq \E \sqr Y^2 N(A)\brackets\\*
   & \qquad\qquad\leq \brac \normP{Y}\sqrt{\E\sqr N(A)\brackets} + \normmP{DY\one_A} \kets^2.\end{align*}
 Taking the square root yields to the double inequality \eqref{equation:first_double_inequality}.

 Using Lemma \ref{lemma:smooths_dense_in} (a) we find for any bounded $Y$ a uniformly bounded sequence $(Y_k)\subset \DS$ such that 
 $Y_k\to Y$ a.s.  Since inequality \eqref{equation:first_double_inequality} holds for all random variables 
 $Y_k-Y_m \in \mathcal{S}$, they are uniformly bounded  and $Y_k-Y_m\to 0$ a.s. as $k,m\to\infty$, we have by dominated convergence that
  \begin{align*}
   & \normmP{D(Y_k-Y_m) \one_A}\\
   &\leq \normP{(Y_k-Y_m)\sqrt{N(A)}} + \normP{Y_k-Y_m}\sqrt{\E\sqr N(A)\brackets} \\ 
   &\to 0  
 \end{align*}
 as $k,m\to\infty$.
 Thus the sequence $(DY_{k}\one_A)_{k=1}^\infty$ converges
 in $L_2(\mm\otimes\PP)$ to some mapping $u\in L_2(\mm\otimes\PP)$. Write $Y_k=\sum_{n=0}^\infty I_n \left( \tilde{f}_n^{(k)}  \right)$.
 The mapping $u$ has a representasion $u = \sum_{n=0}^\infty I_n(h_{n+1})$
 (see Remark \ref{remark:inner_product_DD}), where for all $n\geq0$ we have that
 \begin{align*}
  \left\| n\tilde{f_n}\one_A - h_n \right\|_{L_2({\mm^{\otimes n}})}  
 & \leq  \left\| n\tilde{f_n}\one_A - n\tilde{f}_n^{(k)}\one_A \right\|_{L_2({\mm^{\otimes n}})} 
    \!+ \left\| n\tilde{f}_n^{(k)}\one_A - h_n \right\|_{L_2({\mm^{\otimes n}})} \\*
    & \to 0 
  \end{align*} 
   as $k\to\infty$.
  We obtain \eqref{equation:first_double_inequality} for the random variable $Y$ using dominated convergence, the convergence
  $DY_k\one_A \to  \sum_{n=0}^\infty \left( D I_n(f_n) \one_A \right)$ in $L_2(\mm\otimes\PP)$ and the fact that 
  \eqref{equation:first_double_inequality} holds for all random variables $Y_{k}$. 
\end{proof}

\begin{lemma}\label{lemma:Lipschitz}
 If $Y=\sum_{n=0}^\infty I_n(f_n\one_{\R_+\times\R_0}^{\otimes n}) \in \DD$ and $g:\R\to\R$ is Lipschitz-continuous, then
 $g(Y)\in\DD$ and
 $$D_{t,x}g(Y) = \frac{g(Y + x D_{t,x}Y) - g(Y)}{x} \quad \text{ in } L_2(\mm\otimes\PP).$$ 
\end{lemma}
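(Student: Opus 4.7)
The strategy is a two-stage approximation. First I would establish the formula for $Y \in \DS$ and polynomial $g$ by iterating the product rule from Lemma \ref{lemma:smooths_dense_in} (b); then extend to Lipschitz $g$ by uniform polynomial approximation on bounded sets; and finally pass from $\DS$ to general $Y \in \DD$ with jump-only chaos via the density result of Lemma \ref{lemma:smooths_dense_in} (a).

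\textbf{Polynomial case.} Rearranging Lemma \ref{lemma:smooths_dense_in} (b) yields, for $Y, Z \in \DS$ and $\mm\otimes\PP$-a.e.\ $(t,x,\om)$,
\begin{equation*}
(Y + xD_{t,x}Y)(Z + xD_{t,x}Z) = YZ + xD_{t,x}(YZ),
\end{equation*}
so the map $Z \mapsto Z + xD_{t,x}Z$ is multiplicative on $\DS$. By induction $(Y + xD_{t,x}Y)^n = Y^n + xD_{t,x}(Y^n)$, and by linearity $xD_{t,x}p(Y) = p(Y + xD_{t,x}Y) - p(Y)$ for every polynomial $p$, which on $\R_+\times\R_0$ is the claim with $g = p$.

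\textbf{Lipschitz step and density.} For $Y \in \DS$, both $Y$ and $Y + xD_{t,x}Y$ are pointwise bounded (the product-rule identity shows $|xD_{t,x}Y|$ is controlled by $\|Y\|_\infty$, reflecting the add-a-jump reading of $D$ on smooth cylindrical variables). Approximating $g$ uniformly on the resulting compact interval by polynomials $p_k$ with Lipschitz constants bounded by $L = \|g\|_{\mathrm{Lip}}$, the uniform bound $|p_k(a) - p_k(b)| \leq L|a-b|$ plus dominated convergence pass the formula from $p_k$ to $g$ in $L_2(\mm\otimes\PP)$. For general $Y \in \DD$ with jump-only chaos, Lemma \ref{lemma:smooths_dense_in} (a) (combined with a bounded Lipschitz truncation) provides a uniformly bounded sequence $Y_k \in \DS$ with $Y_k \to Y$ both a.s.\ and in $\DD$. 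Applying the formula to each $Y_k$, the right-hand sides converge in $L_2(\mm\otimes\PP)$ by Lipschitz control combined with $DY_k \to DY$, so $g(Y_k)$ is Cauchy in $\DD$, its limit coincides with $g(Y)$ in $L_2(\PP)$, and the stated derivative formula follows.

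\textbf{Main obstacle.} The delicate step is the pointwise boundedness of $Y + xD_{t,x}Y$ for $Y \in \DS$: this is essentially the add-a-jump representation of $D$ on smooth cylindrical random variables, and is not formally stated in the preliminaries, so it must be unpacked from the product rule (for instance by an induction on the structure of $Y \in \DS$, using that smooth cylindrical random variables are limits of polynomials in the increments). A clean alternative is to mollify $g$ into a $C^\infty$ function with bounded derivative and replace polynomial approximation by a Taylor argument, sidestepping the pointwise bound at the cost of an independent chain-rule identity for smooth $g$. A secondary subtlety is checking that the formula, which \emph{a priori} makes sense only on $\R_+\times\R_0$, extends consistently $\mm\otimes\PP$-a.e., i.e.\ that $g(Y)$ inherits the jump-only chaos structure of $Y$.
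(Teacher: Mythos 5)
The paper does not actually reprove this lemma: its proof is a one-line citation of Lemma 5.1\,(b) in \cite{geiss-laukkarinen}, so what you are attempting is a reconstruction of that external result from the product rule, which is legitimate in principle. Your polynomial step is correct (the rearrangement $(Y+xD_{t,x}Y)(Z+xD_{t,x}Z)=YZ+xD_{t,x}(YZ)$ and the induction on powers are fine), but the two steps that carry the real weight have genuine gaps. First, the pointwise bound $|Y+xD_{t,x}Y|\le\|Y\|_\infty$ for $Y\in\DS$, which your Lipschitz/polynomial approximation hinges on, is flagged but not proved, and the route you suggest (``smooth cylindrical variables are limits of polynomials in the increments'') does not work in this generality: increments of a general L\'evy process need not have finite moments, so such polynomials need not even lie in $L_2(\PP)$ and their chaos expansions and derivatives are undefined. (The bound can in fact be squeezed out of the product rule by a positivity argument, writing $C^2+\e-Y^2=(Z+\sqrt{C^2+\e}\,)^2$ with $Z\in\DS$ and using the multiplicativity of $Z\mapsto Z+xD_{t,x}Z$, but no such argument appears in your proposal; and your mollification alternative presupposes ``an independent chain-rule identity for smooth $g$'', which is essentially the statement to be proved.)

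Second, the closing density step is not correct as written. A uniformly bounded sequence $Y_k$ cannot converge in $\DD$ (or even in $L_2(\PP)$) to an unbounded $Y$, and, more seriously, ``the right-hand sides converge in $L_2(\mm\otimes\PP)$ by Lipschitz control combined with $DY_k\to DY$'' fails: estimating the difference of the two difference quotients by the Lipschitz constant produces the term $2L|Y_k-Y|/|x|$, whose $L_2(\mm\otimes\PP)$-norm involves $\int_{\R_+\times\R_0}x^{-2}\,\ud\mm=\iint \ud t\,\nu(\ud x)$, which is infinite (infinite time horizon, possibly infinite L\'evy measure), so this quantity need not be small. The limit passage has to go through an a.e.-convergent subsequence, the domination $\bigl|\frac{g(Y_k+xD_{t,x}Y_k)-g(Y_k)}{x}\bigr|\le L|D_{t,x}Y_k|$ with generalized dominated convergence, and the closedness of $D$ (via chaos coefficients) to identify the limit as $Dg(Y)$; none of this is in your text. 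Finally, to have $g(Y_k)\in\DD$ and Cauchyness in $\DD$ for $Y_k\in\DS$ you must also control the derivative on $\{x=0\}$, since elements of $\DS$ genuinely depend on the Brownian part and the difference-quotient formula says nothing about the Gaussian direction; your proposal only worries about $\{x=0\}$ for the limiting jump-measurable $Y$, not for the approximating $Y_k$. As it stands, the argument is a plausible programme with its central estimate and its limiting procedure both left open.
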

\begin{proof}
The lemma is an immediate consequence of \cite[Lemma 5.1 (b)]{geiss-laukkarinen}.
\end{proof}

\begin{lemma}\label{lemma:measurability}
Let $Y = \sum_{n=0}^\infty I_n(f_n)\in L_2(\PP)$ and $A\in\bor(\R_+\times\R)$. Then
$$\E \sqr Y | \ftn_A \brackets = \sum_{n=0}^\infty I_n \brac f_n\one_A^{\otimes n} \kets \text{ in }L_2(\PP).$$
\end{lemma}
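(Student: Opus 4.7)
The plan is to verify the identity first for elementary tensor products with disjoint indicators and then extend by linearity, density and the fact that $\E[\cdot|\ftn_A]$ is a contraction on $L_2(\PP)$. The key ingredient is that the random measure $M$ is independently scattered, so that $M(B\cap A)$ is $\ftn_A$-measurable while $M(B\cap A^c)$ is independent of $\ftn_A$ and has mean zero.

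First I would treat the case $f_n=\one_{B_1}\otimes\cdots\otimes\one_{B_n}$ with $B_1,\ldots,B_n\in\bor(\R_+\times\R)$ pairwise disjoint and of finite $\mm$-measure. Splitting $M(B_i)=M(B_i\cap A)+M(B_i\cap A^c)$ and expanding,
$$\prod_{i=1}^n M(B_i)=\sum_{S\subseteq\{1,\ldots,n\}}\prod_{i\in S}M(B_i\cap A)\prod_{j\notin S}M(B_j\cap A^c).$$
For every $S\neq\{1,\ldots,n\}$ the second factor is a product of mean-zero variables that are mutually independent (by disjointness of the $B_j\cap A^c$) and independent of $\ftn_A$, so it has conditional mean zero given $\ftn_A$. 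Only the term $S=\{1,\ldots,n\}$ survives, yielding
$$\E\sqr I_n(f_n)\mid\ftn_A\brackets=\prod_{i=1}^n M(B_i\cap A)=I_n\brac f_n\one_A^{\otimes n}\kets.$$

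Next, I would extend by linearity to finite linear combinations of such disjoint tensor indicators, which are dense in $\Lmn$. Since $I_n$ is $L_2$-continuous and $\E[\cdot|\ftn_A]$ is a contraction on $L_2(\PP)$, both sides depend continuously on $f_n$, and hence the identity $\E[I_n(f_n)|\ftn_A]=I_n(f_n\one_A^{\otimes n})$ holds for every $f_n\in\Lmn$. Summing over $n$ and using orthogonality of the chaoses together with the pointwise bound $\|I_n(f_n\one_A^{\otimes n})\|_{L_2(\PP)}\leq\|I_n(f_n)\|_{L_2(\PP)}$ (since $\widetilde{f_n\one_A^{\otimes n}}=\tilde{f}_n\one_A^{\otimes n}$) guarantees convergence of the right-hand side in $L_2(\PP)$. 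Interchanging $\E[\cdot|\ftn_A]$ with the $L_2$-convergent series $Y=\sum_n I_n(f_n)$ then yields the lemma.

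The main obstacle lies in the independence step when $A$ or $A^c$ meets the Brownian line $\{x=0\}$: one has to verify that $M(B_j\cap A^c)$ is genuinely independent of $\ftn_A$, even when it involves a Brownian contribution. This follows from the L\'evy--It\^o decomposition, in which $W$ and $N$ are independent, together with the independent scattering of $N$ over disjoint subsets of $\R_+\times\R_0$; once this is in place, the conditional expectation annihilates every term with $S\neq\{1,\ldots,n\}$ and the remainder of the proof is the density and continuity argument described above.
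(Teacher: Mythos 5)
Your proposal follows essentially the same route as the paper: the paper's proof is a pointer to the construction of the chaos and to the argument of Nualart's Lemma 1.2.4, which is precisely the scheme you spell out (split $M(B_i)=M(B_i\cap A)+M(B_i\cap A^c)$ on elementary disjoint tensors, kill every term containing a mean-zero factor independent of $\ftn_A$, then extend by linearity, density of elementary functions, and $L_2$-contractivity of the conditional expectation, finally summing the chaoses using $\widetilde{f_n\one_A^{\otimes n}}=\tilde f_n\one_A^{\otimes n}$).

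One step deserves more care than you give it: the claim that $M(B\cap A)$ is $\ftn_A$-measurable. Independent scattering of $M$ is not the reason; what is needed is that $\ftn_A$ is by definition generated (and completed) by the values $N(B')$, $B'\subseteq A$, and that for $B\subseteq A\subseteq\R_+\times\R_0$ the variable $M(B)$ is the $L_2(\PP)$-limit of $\iint_{B\cap\{1/n<|x|<n\}}x\,\tilde N(\ud s,\ud x)$, hence measurable with respect to the completed $\sigma$-algebra $\ftn_A$. This is exactly where the restriction $A\subseteq\R_+\times\R_0$ (under which $\ftn_A$ is defined and under which the lemma is applied in the paper) enters: if $\sigma>0$ and $A$ were allowed to meet the line $\{x=0\}$, say $A=[0,1]\times\R$ and $Y=\sigma W_1=I_1\brac\one_{[0,1]\times\{0\}}\kets$, then $\ftn_A$ is still generated by $N$ alone and is independent of $W$, so $\E\sqr Y|\ftn_A\brackets=0$ while $I_1(f_1\one_A)=Y$; both your measurability claim and the asserted identity fail. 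Your closing paragraph discusses the Brownian contribution only on the $A^c$ side (independence of $M(B_j\cap A^c)$ from $\ftn_A$), whereas it is the $A$ side that actually forces the hypothesis. Once $A\in\bor(\R_+\times\R_0)$ is imposed, the remaining steps of your argument (pulling out the $\ftn_A$-measurable factor, the mean-zero independence argument, density of off-diagonal elementary tensors, the contraction bounds, and the interchange of $\E\sqr\cdot|\ftn_A\brackets$ with the $L_2$-convergent chaos series) are correct and complete.
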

\begin{proof}
The equality can be shown via the construction of the chaos analogously to the proof of \cite[Lemma 1.2.4]{nualartv1}.
\end{proof}

\begin{proof}[Proof of Theorem \ref{theorem:second_double_inequality}]
{ \emph{1.}} Assume $Y \in\DD$ and define $g_m(x)= (-m \vee x)\wedge m$ for $m\geq 1$. From Lemma \ref{lemma:Lipschitz} we get 
$g_m(Y)\in\DD$ and $|Dg_m(Y)|\leq |DY|$.  Then, using monotone convergence and Proposition
\ref{proposition:first_double_inequality}, we obtain
\begin{align*}
& \left| \normP{Y\sqrt{N(A)}} - \normP{Y} \sqrt{\E\sqr N(A)\brackets}  \right| \\
& = \lim_{m\to\infty} \left| \normP{g_m(Y)\sqrt{N(A)}} - \normP{g_m(Y)} \sqrt{\E\sqr N(A)\brackets}  \right| \\
&\leq \limsup_{m\to\infty} \normmP{Dg_m(Y)\one_A}\\
& \leq  \normmP{DY\one_A} < \infty.
\end{align*}
Hence $Y\sqrt{N(A)}\in L_2(\PP)$.

{ \emph{2.}} Assume $\| Y\sqrt{N(A)}\| < \infty$ and define $g_m(Y)$ as above. 
Write $Y = \sum_{n=0}^\infty I_n \brac f_n \kets$ and $g_m(Y) = \sum_{n=0}^\infty I_n ( f_n^{(m)})$. 
Since $g_m(Y)\to Y$ in $L_2(\PP)$, it holds 
$\| \tilde{f}_n^{(m)} \|^2_{L_2(\mm^{\otimes n})}\to \| \tilde{f_n} \|^2_{L_2(\mm^{\otimes n})}$ as $m\to\infty$.
 Since $g_m(Y)$ is $\ftn_A$-measurable, we have $\tilde{f}_n^{(m)} = \tilde{f}_n^{(m)}\one_A^{\otimes n}$ $\mm^{\otimes n}$-a.e. 
by Lemma \ref{lemma:measurability} for all $m\geq 1$. 
 By Fatou's Lemma, 
Proposition \ref{proposition:first_double_inequality} and monotone convergence we get 
\begin{align*}
  & \sqrt{ \sum_{n=1}^\infty nn! \left\| \tilde{f_n} \right\|^2_{L_2(\mm^{\otimes n})}}  \\*
   &\leq \liminf_{m\to\infty}\sqrt{\sum_{n=1}^\infty nn! \left\| \tilde{f}_n^{(m)} \right\|^2_{L_2(\mm^{\otimes n})}} \\*
   &\leq \liminf_{m\to\infty} \brac \normP{g_m(Y)\sqrt{N(A)}} + \normP{g_m(Y)} \sqrt{\E\sqr N(A)\brackets}  \kets\\*
  & = \normP{Y\sqrt{N(A)}} + \normP{Y} \sqrt{\E\sqr N(A)\brackets} < \infty.
\end{align*}
Thus $Y\in\DD$.
\end{proof}

We use the notation $ \alpha \sim_c  \beta$  for $\frac{1}{c}  \beta  \leq  \alpha \leq c \beta$ for $c\geq 1$ and 
$\alpha, \beta\in[0,\infty]$.
 
\begin{corollary}\label{corollary:norm_equivalence}
Let $A\in\bor(\R_+\times\R_0)$ be such that $ \E \left[ N(A)\right]  < \infty$ and assume that 
$Y= \sum_{n=0}^\infty I_n(f_n)\in L_2(\PP)$
 is $\ftn_A$-measurable.  Then
$$  \|Y\|_{\DD} \sim_{\sqrt{2}\left( \sqrt{ \E \sqr N(A) \brackets} +1  \right)}  
\left\|   Y  \sqrt{N(A) +1 }  \right\|_{L_2(\PP)}, $$
 where the norms may be infinite. 
\end{corollary}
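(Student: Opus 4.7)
The plan is to derive the corollary from Theorem \ref{theorem:second_double_inequality} via two elementary identities plus a short support argument. First, I would observe that the chaos expansion yields
\begin{equation*}
\|Y\|_{\DD}^2 = \normP{Y}^2 + \normmP{DY}^2,
\end{equation*}
since $(n+1)! = n! + n \cdot n!$ splits the $\DD$-norm into the $L_2(\PP)$-norm and the derivative norm (the latter identified via Remark \ref{remark:inner_product_DD}). Dually,
\begin{equation*}
\normP{Y\sqrt{N(A)+1}}^2 = \normP{Y}^2 + \normP{Y\sqrt{N(A)}}^2.
\end{equation*}
Hence the problem reduces to comparing $\normmP{DY}$ with $\normP{Y\sqrt{N(A)}}$ up to an additive error of size $\sqrt{\E[N(A)]}\,\normP{Y}$.

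Second, I would leverage the $\ftn_A$-measurability to upgrade the $DY\one_A$ estimates of Theorem \ref{theorem:second_double_inequality} to estimates on $\normmP{DY}$ itself. Lemma \ref{lemma:measurability} combined with the uniqueness of the symmetric chaos expansion implies $\tilde f_n = \tilde f_n \one_A^{\otimes n}$ in $L_2(\mm^{\otimes n})$ (because $\one_A^{\otimes n}$ is permutation-invariant), so $\tilde f_n(\cdot,(t,x))=0$ for $(t,x)\notin A$ and consequently $DY = DY\one_A$ in $L_2(\mm\otimes\PP)$. With this substitution, Theorem \ref{theorem:second_double_inequality} yields both
\begin{equation*}
\normmP{DY} \leq \normP{Y\sqrt{N(A)}} + \sqrt{\E[N(A)]}\,\normP{Y}
\end{equation*}
and the symmetric bound with $\normmP{DY}$ and $\normP{Y\sqrt{N(A)}}$ swapped.

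Third, I would square both bounds, apply $(a+b)^2\leq 2(a^2+b^2)$, and substitute into the identities of the first paragraph. Writing $E = \sqrt{\E[N(A)]}$, the inequality $\|Y\|_{\DD}^2 \leq 2(E+1)^2 \normP{Y\sqrt{N(A)+1}}^2$ reduces to the trivial estimates $1+2E^2 \leq 2(E+1)^2$ and $2 \leq 2(E+1)^2$; the reverse direction is symmetric. The case where one side is infinite is covered by the contrapositives of Theorem \ref{theorem:second_double_inequality}: if $Y\in L_2(\PP)\setminus\DD$, then $\ftn_A$-measurability combined with part 2 forces $\normP{Y\sqrt{N(A)}}=\infty$, while $Y\notin L_2(\PP)$ makes both sides trivially infinite. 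The only structurally non-trivial step is the support argument in paragraph two; the rest is elementary algebra and the constant $\sqrt{2}(E+1)$ falls out of the $(a+b)^2 \leq 2(a^2+b^2)$ step.
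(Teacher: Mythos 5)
Your proof is correct and follows essentially the same route as the paper: both directions are deduced from the two inequalities of Theorem \ref{theorem:second_double_inequality} by elementary norm algebra, yielding the same constant $\sqrt{2}\brac\sqrt{\E\sqr N(A)\brackets}+1\kets$ (the paper uses the two-sided bounds $\|Y\|_{\DD}\leq\normP{Y}+\normmP{DY}\leq\sqrt{2}\|Y\|_{\DD}$ and $\normP{Y\sqrt{N(A)+1}}\leq\normP{Y\sqrt{N(A)}}+\normP{Y}\leq\sqrt{2}\normP{Y\sqrt{N(A)+1}}$ where you use the exact Pythagorean identities plus $(a+b)^2\leq2(a^2+b^2)$). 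Your support argument $DY=DY\one_A$ is correct but not actually needed, since \eqref{equation:second_double_inequality1} already gives the bound with $\normmP{DY\one_A}\leq\normmP{DY}$ and \eqref{equation:second_double_inequality2} already involves $\normmP{DY}$ itself.
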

\begin{proof}
The inequalities \eqref{equation:second_double_inequality1} and \eqref{equation:second_double_inequality2} give  the relation
$$\left \| Y \sqrt{N(A)} \right\|_{L_2(\PP)} + \|Y\|_{L_2(\PP)} \sim_{  \sqrt{ \E \sqr N(A) \brackets} +1 } \|Y\|_{L_2(\PP)} 
+ \|DY\|_{L_2(\mm\otimes\PP)}. $$ 
The claim follows from $\|Y\|_{\DD} \leq \|Y\|_{L_2(\PP)} + \|DY\|_{L_2(\mm\otimes\PP)} \leq \sqrt{2}\|Y\|_{\DD} $ and
\begin{align*}
       { \normP{Y\sqrt{N(A)+1}} }
 & \leq \left\| Y \brac \sqrt{N(A)} + 1 \kets    \right\|_{L_2(\PP)} \\
 & \leq \left\| Y \sqrt{N(A)}  \right\|_{L_2(\PP)} + \|Y\|_{L_2(\PP)}\\
 & \leq { \sqrt{ 2 \left( \left\| Y \sqrt{N(A)}  \right\|^2_{L_2(\PP)} + \|Y\|^2_{L_2(\PP)} \right) } }\\
 & = { \sqrt{2} \left\|  Y  \sqrt{N(A) +1 }   \right\|_{L_2(\PP)}. } 
\end{align*}
\end{proof}



\section{Fractional differentiability}
\label{section:fractional}

We consider fractional smoothness in the sense of real interpolation spaces between $L_2(\PP)$ and $\DD$. 
For parameters $\theta\in(0,1)$ and $q\in[1,\infty]$ the interpolation space $(L_2(\PP),\DD)_{\theta,q}$
is a Banach space, intermediate between $L_2(\PP)$ and $\DD$.

We shortly recall the $K$-method of real interpolation. 
The K-functional of $Y\in L_2(\PP)$ is the mapping
$K(Y,\cdot; L_2(\PP),\DD): (0,\infty) \to [0,\infty)$ defined by 
\begin{align*} 
&K(Y,s; L_2(\PP),\DD)\\ & := \inf\{ \|Y_0\|_{L_2(\PP)} + s\|Y_1\|_{\DD}:\ Y=Y_0+Y_1,\,Y_0\in L_2(\PP),\, Y_1\in \DD \}
\end{align*}
and we shall use the abbreviation $K(Y,s)$ for $K(Y,s; L_2(\PP),\DD)$.
Let $\theta\in(0,1)$ and $q\in[1,\infty]$. The space $(L_2(\PP),\DD)_{\theta,q}$ consists of all $Y\in L_2(\PP)$
such that
\[
\|Y\|_{(L_2(\PP),\DD)_{\theta,q}} 
= \begin{cases}
  \left[ \int_0^\infty \left| s^{-\theta} K(Y,s) \right|^q \frac{\mathrm{d}s}{s} \right]^{\frac{1}{q}}, & q\in[1,\infty)\\
  \sup_{s>0} s^{-\theta} K(Y,s), & q=\infty
                               \end{cases}
\]
is finite.

The interpolation spaces are nested in a lexicographical order:
\[
\DD \subset (L_2(\PP),\DD)_{\eta,p} \subset (L_2(\PP),\DD)_{\theta,q} \subseteq (L_2(\PP),\DD)_{\theta,p} \subset L_2(\PP)
\]
 for $ 1 \leq q \leq p \leq \infty$ and
 $ 0 < \theta < \eta < 1$.
For further properties of interpolation we refer to \cite{bennet-sharpley} and \cite{triebel}.

\begin{theorem} \label{theorem:fractional}
 Let $\theta\in(0,1)$, $A\in\bor(\R_+\times\R_0)$ be such that $ \E\left[ N(A) \right]< \infty$ 
 and $Y\in L_2(\PP)$
 be $\ftn_A$-measurable. Then 
$$Y \in (L_2(\PP),\DD)_{\theta,2}\text{ if and only if }  \E\left[ Y^2 N(A)^\theta  \right] < \infty.$$ If
 $Y \in (L_2(\PP),\DD)_{\theta,2}$, then
$$ \|Y\|_{(L_2(\PP),\DD)_{\theta,2}}
 \sim_{\sqrt{2}\frac{  \sqrt{ \E\sqr N(A) \brackets } +1}{\sqrt{\theta(1-\theta)}}}  \left\| Y  \sqrt{N(A) +1}^{\,\theta}  \right\|_{L_2(\PP)}.$$
\end{theorem}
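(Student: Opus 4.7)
My plan is to use Corollary~\ref{corollary:norm_equivalence} to replace the couple $(L_2(\PP),\DD)$, when restricted to $\ftn_A$-measurable random variables, by the weighted Hilbert couple $(L_2(\PP),L_2(w^2\,\ud\PP))$ with $w:=\sqrt{N(A)+1}$, and then compute the $K$-functional of this weighted couple by a pointwise optimization. The first step is to show that the infimum defining $K(Y,s;L_2(\PP),\DD)$ may be restricted to decompositions $Y=Y_0+Y_1$ into $\ftn_A$-measurable summands. For this I need that $\E[\cdot\mid\ftn_A]$ is a contraction on both $L_2(\PP)$ and $\DD$: by Lemma~\ref{lemma:measurability} it maps $I_n(f_n)$ to $I_n(f_n\one_A^{\otimes n})$, and since $\one_A^{\otimes n}$ is symmetric, $\widetilde{f_n\one_A^{\otimes n}}=\tilde f_n\,\one_A^{\otimes n}$, whose $L_2(\mm^{\otimes n})$-norm is at most $\|\tilde f_n\|_{L_2(\mm^{\otimes n})}$. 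Applying Corollary~\ref{corollary:norm_equivalence} to the $\ftn_A$-measurable part $Y_1$ then converts the problem to the weighted couple, at the cost of the constant $c_1=\sqrt{2}(\sqrt{\E[N(A)]}+1)$.

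For the second step I would compute $K(Y,s;L_2(\PP),L_2(w^2\,\ud\PP))$ pointwise. Up to a factor of $\sqrt 2$ the expression $\|Y_0\|_{L_2(\PP)}+s\|Y_1w\|_{L_2(\PP)}$ is equivalent to $(\|Y_0\|_{L_2(\PP)}^2+s^2\|Y_1w\|_{L_2(\PP)}^2)^{1/2}$, and the latter quadratic functional separates in $\omega$. Minimizing $|Y_0|^2+s^2w^2|Y-Y_0|^2$ in $Y_0\in\R$ at each $\omega$ yields the value $|Y|^2\,s^2w^2/(1+s^2w^2)$, so
\[
K(Y,s;L_2(\PP),L_2(w^2\,\ud\PP))^2\sim_{2}\int_{\Omega}\frac{s^2w^2}{1+s^2w^2}\,Y^2\,\ud\PP.
\]
Plugging this into the definition of $\|Y\|_{(L_2(\PP),\DD)_{\theta,2}}^2$, applying Fubini, and substituting $u=sw$ in the inner integral gives
\[
\|Y\|_{(L_2(\PP),\DD)_{\theta,2}}^2\sim\left(\int_0^\infty\frac{u^{1-2\theta}}{1+u^2}\,\ud u\right)\E\!\left[Y^2(N(A)+1)^\theta\right]=\frac{\pi}{2\sin(\pi\theta)}\,\E\!\left[Y^2(N(A)+1)^\theta\right].
\]
Since $\pi/\sin(\pi\theta)\sim 1/(\theta(1-\theta))$ uniformly on $(0,1)$, taking square roots gives both the finiteness criterion $\E[Y^2 N(A)^\theta]<\infty$ (using $N(A)^\theta\le(N(A)+1)^\theta\le N(A)^\theta+1$) and the asserted norm equivalence.

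The main technical hurdle is the first step: one must verify that $\E[\cdot\mid\ftn_A]$ contracts $\|\cdot\|_\DD$ so that the $K$-functional infimum is attained on $\ftn_A$-measurable decompositions and Corollary~\ref{corollary:norm_equivalence} becomes applicable. The remainder is a standard Hilbert-couple computation together with a Fubini argument and a Beta-function evaluation, and combining the constants from the three stages produces the factor $\sqrt{2}(\sqrt{\E[N(A)]}+1)/\sqrt{\theta(1-\theta)}$ stated in the theorem.
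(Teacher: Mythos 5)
Your proposal is correct and its skeleton coincides with the paper's: both reduce to $\ftn_A$-measurable decompositions via the contractivity of $\E[\,\cdot\,|\ftn_A]$ on $L_2(\PP)$ and on $\DD$ (Lemma \ref{lemma:measurability}), then replace $\|Y_1\|_{\DD}$ by $\|Y_1\sqrt{N(A)+1}\|_{L_2(\PP)}$ through Corollary \ref{corollary:norm_equivalence}, and finally integrate the resulting weighted $K$-functional against $s^{-2\theta-1}\,\ud s$ via Fubini. Where you diverge is the evaluation of that $K$-functional: you solve the Hilbert-couple minimization exactly, obtaining the pointwise minimizer $Y_1=Y/(1+s^2w^2)$ with $w=\sqrt{N(A)+1}$ and the weight $s^2w^2/(1+s^2w^2)$, and then evaluate a Beta-type integral $\int_0^\infty u^{1-2\theta}(1+u^2)^{-1}\ud u=\pi/(2\sin\pi\theta)$; the paper instead uses the truncation decomposition $Y_0=Y\one_{\{w>1/s\}}$ together with the elementary inequality $|Y-y|+|y|a\ge|Y|\min\{1,a\}$, which gives $K(Y,s)\sim\|Y\min\{1,sw\}\|_{L_2(\PP)}$ and the exact integral $\tfrac{1}{2\theta(1-\theta)}(N(A)+1)^{\theta}$, hence the stated constant directly. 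Your route buys an exact optimizer and a cleaner closed form in $s$, at the cost of two small items you should make explicit: (i) admissibility of the minimizer, i.e.\ that $Y_1=Y/(1+s^2w^2)$ lies in $\DD$ — this follows since it is $\ftn_A$-measurable and $\|Y_1w\|_{L_2(\PP)}\le\|Y\|_{L_2(\PP)}/(2s)<\infty$, so Theorem \ref{theorem:second_double_inequality} (2) applies; and (ii) the constant bookkeeping: to land on the claimed factor $\sqrt{2}\bigl(\sqrt{\E[N(A)]}+1\bigr)/\sqrt{\theta(1-\theta)}$ rather than merely ``a comparable constant'', you need the two elementary bounds $\pi\theta(1-\theta)\le\sin(\pi\theta)$ and $\pi/\sin(\pi\theta)\ge 4\theta(1-\theta)$, which indeed hold on $(0,1)$, so your argument does recover the theorem as stated. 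The ``if and only if'' part is handled correctly, since all equivalences hold with values in $[0,\infty]$ and $N(A)^{\theta}\le(N(A)+1)^{\theta}\le N(A)^{\theta}+1$.
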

\begin{proof}
We first show that
 \begin{equation} \label{equation:Kfunctional}
  K(Y,s)    
\sim_{ 2\brac  \sqrt{\E\sqr N(A)\brackets} +1  \kets} \left\|  Y \min\left\{1, s  \sqrt{N(A) +1}  \right\}  \right \|_{L_2(\PP)}.
  \end{equation}
From Lemma \ref{lemma:measurability} we obtain the inequalities $  \| \E \sqr Y_0 | \ftn_A \brackets \|_{L_2(\PP)} \leq \|Y_0\|_{L_2(\PP)} $
and  $\| \E \sqr Y_1 | \ftn_A \brackets \|_{\DD} \leq \|Y_1\|_{\DD} $ for any $Y_0\in L_2(\PP)$ and $Y_1\in\DD$. Hence
\begin{align}\label{align:K_relation_one}
     & K(Y,s) \nonumber\\*
& =  \inf\left\{ \|Y_0\|_{L_2(\PP)} + s\|Y_1\|_{\DD}:\ Y_0+Y_1 = Y,\, Y_0\in L_2(\PP),\, Y_1\in \DD \right\}\nonumber \\
& = \inf\left\{ \| \E \sqr Y_0 | \ftn_A \brackets \|_{L_2(\PP)} + s\| \E \sqr Y_1 | \ftn_A \brackets \|_{\DD}: Y_0+Y_1 = Y,\, Y_1\in \DD \right\}\nonumber \\*
& \sim_{c} 
   \inf\left \{ \|Y_0\|_{L_2(\PP)} +  s \left\|  Y_1   \sqrt{N(A)+1}   \right\|_{L_2(\PP)}  : 
     Y_0 + Y_1 = Y, Y_1\in\DD \right \}
\end{align}
for $c = \sqrt{2}\brac \sqrt{\E\sqr N(A) \brackets} +1\kets $ by Corollary \ref{corollary:norm_equivalence}. 
Next we approximate the $K$-functional from above with the choice $Y_0 = Y\one_{\left\{  \sqrt{N(A)+1}  > \frac{1}{s}\right\}} $ 
and get from \eqref{align:K_relation_one}  that
\begin{align*}
&     \frac{1}{c}  K(Y,s)\\
& \leq   \brac \left\|Y \one_{ \left\{{ \sqrt{N(A) +1}} > \frac{1}{s} \right\}} \right\|_{L_2(\PP)} 
       + s \left\| Y{ \sqrt{N(A) +1} }  \one_{ \left\{ { \sqrt{N(A) +1 }} \leq \frac{1}{s} \right\}} \right\|_{L_2(\PP)} \kets \\
& \leq \sqrt{2}  \left\| Y \min\left\{ 1, s { \sqrt{N(A) +1 }} \right\}  \right\|_{L_2(\PP)}.
\end{align*}
Using the triangle inequality and the fact that $$|Y(\om) -y| + |y|a \geq |Y(\om)| \min\{1,a\}$$ for all $\om\in\Om$, 
$y\in\R$ and $a\geq 0$  we  obtain from \eqref{align:K_relation_one} the lower bound 
\begin{align*}
  &   c  K(Y,s)\\*
& \geq \inf \left\{ \left\| |Y_0| + |Y_1|s { \sqrt{N(A) +1} }   \right\|_{L_2(\PP)} : Y = Y_0+Y_1,\, Y_1\in\DD  \right\}\\*
& \geq \left\| Y \min \left\{ 1, s { \sqrt{N(A) +1}}  \right\}  \right\|_{L_2(\PP)}.
\end{align*}
We have shown that \eqref{equation:Kfunctional} holds.
From \eqref{equation:Kfunctional} we get 
\begin{align*}
& \|Y\|_{(L_2(\PP),\DD)_{\theta,2}} \\*
&\sim_{2\brac \sqrt{\E \sqr N(A) \brackets} +1 \kets} 
\brac \int_0^\infty \left| s^{-\theta} \normP{ Y \min\left\{ 1, s  { \sqrt{N(A) +1 } }  \right\}  } \right|^2 
\frac{\ud s}{s} \kets^{\frac{1}{2}}.
\end{align*}
We finish the proof by computing the integral using first Fubini's theorem. We get
\begin{align*}
&   \int_0^\infty \left| s^{-\theta} \normP{ Y \min\left\{ 1, s { \sqrt{N(A) +1 } }  \right\} } \right|^2 \frac{\ud s}{s}\\*
& =  \E \sqr Y^2 \int_0^\infty s^{-2\theta}  \min\left\{ 1, s^2 {\brac N(A) +1 \kets } \right\}\frac{\ud s}{s}  \brackets  \\*
& =  \E \sqr Y^2 \frac{1}{2\theta(1-\theta)} { \brac N(A) + 1 \kets^{\theta} } \brackets.
\end{align*}
\end{proof}



\section{Concluding remarks}

From Theorem \ref{theorem:second_double_inequality} assertion \emph{2.} we can conclude that a higher integrability than square integrability
can imply Malliavin differentiability. For example,
all the spaces $L_p(\Om,\ftn_A, \PP)$ are subspaces of $\DD$ when $p>2$ and $\E[N(A)]< \infty$ as we can deduce from the following corollary.

\begin{corollary} Let $A\in\bor(\R_+\times\R_0)$ be such that $\lambda:=\E[N(A)] \in (0, \infty)$ so that $N(A)\sim$ Poisson($\lambda$). 
Then for the space
$$L_2 \log^+ L_2(\Om,\ftn_A,\PP) : = \left\{ Y \in L_2(\Om,\ftn_A,\PP): \E \sqr Y^2 \ln^+ Y^2  \brackets < \infty \right\},$$
where $\ln^+x = \max\{\ln x , 0\}$, it holds that
$$L_2 \log^+ L_2(\Om,\ftn_A,\PP)\subsetneq \DD \cap L_2(\Om,\ftn_A,\PP).$$
\end{corollary}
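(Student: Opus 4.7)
The plan is to reduce everything to a weighted $L_2$-condition via Corollary \ref{corollary:norm_equivalence}, then handle the two directions separately.

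\textbf{Reduction.} Since $Y$ is $\ftn_A$-measurable and $\E[N(A)]=\la < \infty$, Corollary \ref{corollary:norm_equivalence} tells us that $Y \in \DD \cap L_2(\Om,\ftn_A,\PP)$ if and only if $\normP{Y\sqrt{N(A)+1}} < \infty$, which (given $Y \in L_2$) is equivalent to $\E[Y^2 N(A)] < \infty$. So the corollary reduces to:
(i) $\E[Y^2 \ln^+ Y^2] < \infty$ implies $\E[Y^2 N(A)] < \infty$; and
(ii) there exists $Y \in L_2(\Om,\ftn_A,\PP)$ with $\E[Y^2 N(A)] < \infty$ but $\E[Y^2 \ln^+ Y^2] = \infty$.

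\textbf{Inclusion (i).} I would use the elementary Young-type inequality
\[
xy \leq x \ln^+ x + e^{y} \qquad \text{for all } x,y \geq 0,
\]
which is verified by splitting at $x=1$ and, for $x > 1$, maximizing $y \mapsto xy - x\ln x - e^y$ at $y = \ln x$. Applied pointwise with $x = Y^2$ and $y = N(A)$, taking expectations gives
\[
\E[Y^2 N(A)] \leq \E[Y^2 \ln^+ Y^2] + \E[e^{N(A)}].
\]
Since $N(A) \sim \text{Poisson}(\la)$, the moment generating function gives $\E[e^{N(A)}] = e^{\la(e-1)} < \infty$, and the first term is finite by hypothesis. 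This proves the inclusion.

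\textbf{Strictness (ii).} I would construct $Y = f(N(A))$ concentrated on a sparse sequence. Write $p_k = e^{-\la}\la^k/k!$ and set $n_j = 2^j$ for $j \geq 1$. Define
\[
f(n_j)^2 = \frac{1}{(n_j+1)\, j^2\, p_{n_j}}, \qquad f(k) = 0 \text{ for } k \notin \{n_j : j \geq 1\}.
\]
Then
\[
\E[Y^2(N(A)+1)] = \sum_{j \geq 1} (n_j+1) f(n_j)^2 p_{n_j} = \sum_{j \geq 1} \frac{1}{j^2} < \infty,
\]
so $Y \in L_2(\Om,\ftn_A,\PP)$ and $Y \in \DD$. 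On the other hand, by Stirling, $-\ln p_{n_j} \sim n_j \ln n_j$ as $j \to \infty$, so $\ln^+ f(n_j)^2 \sim n_j \ln n_j$. Thus
\[
\E[Y^2 \ln^+ Y^2] = \sum_{j \geq 1} \frac{\ln^+\!\bigl(f(n_j)^2\bigr)}{(n_j+1)\, j^2} \sim \sum_{j \geq 1} \frac{n_j \ln n_j}{(n_j+1) j^2} \sim (\ln 2) \sum_{j \geq 1} \frac{1}{j} = \infty,
\]
witnessing strictness.

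\textbf{Main obstacle.} The inclusion is routine once the Orlicz-duality-flavored inequality $xy \leq x \ln^+ x + e^y$ is written down, together with the fact that all exponential moments of a Poisson variable are finite. The only genuine work is in (ii): one must tune the sparse sequence $(n_j)$ and weights $c_j$ so that $\sum n_j c_j$ converges while $\sum n_j c_j \ln n_j$ diverges, which is possible because $-\ln p_k$ grows like $k \ln k$ and so dominates $\ln k$ by a factor of $k$; the geometric choice $n_j = 2^j$, $c_j = 1/j^2$ provides the cleanest separation.
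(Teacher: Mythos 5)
Your proposal is correct and follows essentially the same route as the paper: the inclusion via a Young-type inequality pairing $x\ln^+ x$ against $e^y$ together with finiteness of the exponential moment of the Poisson variable $N(A)$, and strictness via an explicit counterexample of the form $f(N(A))$ exploiting that $-\ln \PP(N(A)=n)\sim n\ln n$ by Stirling, so that the $\ln^+$-moment picks up an extra factor of order $n$ over the weight $N(A)$. The only cosmetic difference is that the paper supports its counterexample on all $n\geq 2$ with $f(n)^2 = e^{\la}\,n!/(\la^n n^2\ln^a n)$, $a\in(1,2]$, while you use the lacunary sequence $n_j=2^j$ with weights $1/j^2$; both choices implement the same idea.
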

\begin{proof}
Suppose $\E \sqr Y^2 \ln^+ Y^2  \brackets < \infty$  and let $\vph(y)=\ln(y+1)$. The functions $\Phi$ and $\Phi^\star$ with
$$\Phi(x) = \int_0^x \vph(y) \ud y = (x+1)\ln(x+1) -x  \leq 1 + x\ln^+ x$$ 
and
$$\Phi^\star(x)= \int_0^x \vph^{-1}(y) \ud y =e^x - x -1$$
are a complementary pair of Young functions. They satisfy the
Young inequality $xy \leq \Phi(x) + \Phi^\star(y)$ for all $x,y\geq0$ and we get
\begin{align*}
       \E \left[ Y^2 N(A)  \right] 
& \leq \E \sqr \Phi\left( Y^2 \right) \brackets + \E \sqr \Phi^\star(N(A)) \brackets   \\*
& \leq \E \left[  Y^2  \ln^+\left( Y^2 \right)   \right] + e^{(e-1)\la } - \la   \\*
 &< \infty. \end{align*}
Hence $Y\in\DD$ by Theorem \ref{theorem:second_double_inequality}.

To see that the inclusion is strict, let $a\in(1,2]$ and choose a Borel function $f:\mathbbm{R}\to\mathbbm{R}$ such that
$f(0)=f(1)=0$ and
$$f(n) = \sqrt{e^\lambda \frac{n!}{\lambda^n} \frac{1}{n^2 \ln^a n}} \quad \text{ for }n = 2,3,\ldots.$$
Then, since
 $\ln n!  = \sum_{k=2}^n \ln k \geq \int_1^n \ln x\, \ud x  = n\ln n - n  + 1 $ for $n\geq 2$ and $a\leq 2$, we have
\begin{align*}
  \E\left[f^2(N(A))\ln^+ f^2(N(A))\right] 
& = \sum_{n=2}^\infty  \frac{1}{n^2 \ln^a n}\ln \left( e^\lambda \frac{n!}{\lambda^n} \frac{1}{n^2 \ln^a n}  \right)\\
& = \sum_{n=2}^\infty  \frac{\ln n!}{n^2 \ln^a n}
    + \sum_{n=2}^\infty  \frac{1}{n^2 \ln^a n}\ln \left(e^\lambda\frac{1}{\lambda^n} \frac{1}{n^2 \ln^a n}  \right) \\
& = \infty,
\end{align*}
but
$$\E\left[N(A) f^2(N(A))\right] = \sum_{n=2}^\infty nf^2(n) e^{-\lambda} \frac{\lambda^n}{n!} = \sum_{n=2}^\infty \frac{1}{n\ln^a n} < \infty $$
so that $f(N(A)) \in \DD$ by Theorem \ref{theorem:second_double_inequality}.
\end{proof}


\begin{remark}\label{remark:compound_differentiability}
 Suppose $\s=0$ and $\nu(\R)<\infty$, which means that $X$ is a compound Poisson process (with drift) and
  $$X_t = \beta t + \int_{(0,t]\times\R_0} x N(\ud s, \ud x)\quad  \text{ for all }t\geq 0 \text{ a.s.}$$
 for some $\beta\in\R$. The process $(N_t)_{t\geq0}$, with $N_t = N((0,t]\times\R_0)$ a.s., is the Poisson process associated to $X$.
 Let $T\in(0,\infty)$ and $\ftn_T$ be the completion of the sigma-algebra generated by  $(X_t)_{t\in[0,T]}$. Then
  $\ftn_T = \ftn_{[0,T]\times\R}$ and by Theorems \ref{theorem:second_double_inequality} and \ref{theorem:fractional} for
 any
 $\ftn_T$-measurable random variable $Y$ and any $\theta\in(0,1)$ it holds that
 \begin{itemize} \item[(a)] $Y\in\DD$ if and only if
 $\normP{ Y\sqrt{N_T+1}}  < \infty$ and
 \item[(b)] $Y\in(L_2(\PP),\DD)_{\theta,2}$ if and only if $\normP{ Y\sqrt{N_T+1}^{\,\theta}}  < \infty$.
 \end{itemize}
\end{remark}

{\bf Acknowledgements.} The author is grateful to Christel Geiss and Stefan Geiss for several
valuable ideas and suggestions regarding this work.



\end{document}